\newtheorem{theorem}{Theorem}[section]
\newtheorem{lemma}[theorem]{Lemma}
\newtheorem{proposition}[theorem]{Proposition}
\newtheorem{corollary}[theorem]{Corollary}
\theoremstyle{definition}
\newtheorem{example}[theorem]{Example}
\newtheorem{remark}[theorem]{Remark}
\numberwithin{equation}{section}
\newcommand{\rd}{{\,\rm d}}
\newcommand{\e}{{\rm e}}
\newcommand{\R}{{\mathbb R}}
\newcommand{\C}{{\mathbb C}}
\newcommand\eps{\varepsilon}
\newcommand\beq{\begin{equation}}
\newcommand\eeq{\end{equation}}
\newcommand{\dist}{\mathrm{dist}}
\newcommand\re{\mathrm{Re}}
\newcommand\im{\mathrm{Im}}
\newcommand\I{\mathrm{i}}
\newcommand{\beqnt}{\begin{equation*}}
\newcommand{\eeqnt}{\end{equation*}}
\newcommand{\set}[2]{\{#1 : #2 \}}
\begin{document}
\author{Jean-Claude Cuenin}
\address{Mathematisches Institut, Ludwig-Maximilians-Universit\"at M\"unchen, 80333 Munich, Germany}
\email{cuenin@math.lmu.de}

\title[Schr\"odinger operators with slowly decaying potentials]{Improved eigenvalue bounds for Schr\"odinger operators with slowly decaying potentials}

\begin{abstract}
We extend a result of Davies and Nath \cite{MR1946184} on the location of eigenvalues of Schr\"odinger operators with slowly decaying complex-valued potentials to higher dimensions. In this context, we also discuss various examples related to the
Laptev--Safronov conjecture \cite{MR2540070}.
\end{abstract}

\maketitle

\section{Introduction and main result}

Eigenvalue estimates for Schr\"odinger operators $-\Delta+V$ on $L^2(\R^d)$ with complex-valued potentials $V\in L^q(\R^d)$ have been intensively studied over the past two decades by many authors, starting with the observation of Abramov, Aslanyan and Davies \cite{AAD01} that in one dimension the bound
\begin{align}\label{AAD}
|z|^{\frac{1}{2}}\leq \frac{1}{2}\int_{\R}|V(x)|\rd x
\end{align}
holds for any eigenvalue $z\in\C\setminus[0,\infty)$ of $-\Delta+V$. This scale-invariant bound has the same form (up to replacing $|V(x)|$ by $V(x)_-$) as the endpoint Lieb-Thirring inequality in $d=1$ (\cite{LiebThirring,MR1663336}) for a potential with a single eigenvalue. Higher-dimensional versions of \eqref{AAD} were proved by Frank \cite{MR2820160,MR3717979} and Frank--Simon \cite{MR3713021}. A maybe less well-known bound in the one-dimensional case, due to Davies and Nath \cite{MR1946184}, improves \eqref{AAD} to
\begin{align}\label{DN}
|z|^{\frac{1}{2}}\leq \frac{1}{2}\sup_{y\in\R}\int_{\R}|V(x)|\exp(-\im\sqrt{z}|x-y|)\rd x,
\end{align} 
which is valid even for slowly decaying potentials, i.e.\ $V\notin L^1(\R)$.  
The aim of this note is to prove the following higher-dimensional analogue of \eqref{DN}.

\begin{theorem}\label{thm DN higher dim}
Let $d\geq 2$, $q\in [\max(d/2,1+),(d+1)/2]$ and $V\in L^{q}_{\rm loc}(\R^d)$. Then there exists $C_{d,q}>0$ such that any eigenvalue $z\in\C$ of $-\Delta+V$ satisfies
\begin{align}\label{DN higher dim}
|z|^{q-\frac{d}{2}}\leq C_{d,q}\sup_{y\in\R^d}\int_{\R^d}|V(x)|^{q}\exp(-\im\sqrt{z}|x-y|)\rd x.
\end{align}
\end{theorem}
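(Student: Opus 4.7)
The plan is to combine the Birman--Schwinger principle, the uniform Sobolev (Kenig--Ruiz--Sogge, KRS) estimate, and a weighted Hölder inequality that extracts the exponential factor $e^{-\im\sqrt z\,|x-y_0|}$. First, if $z$ is an eigenvalue of $-\Delta+V$, the Birman--Schwinger operator $K_z=|V|^{1/2}(-\Delta-z)^{-1}V^{1/2}\sgn V$ has $-1$ as an eigenvalue on $L^2$; denote the eigenvector by $\phi\in L^2\setminus\{0\}$. Writing $\|\phi\|_{L^2}^2=-\langle R_z u,v\rangle$ with $u=V^{1/2}\sgn V\cdot\phi$, $v=|V|^{1/2}\phi$, $R_z=(-\Delta-z)^{-1}$, and applying the KRS bound $\|R_z\|_{L^p\to L^{p'}}\le C_{d,q}|z|^{d/(2q)-1}$ at the Stein--Tomas exponent $p=2q/(q+1)$ (which is exactly the range of $q$ in the statement) yields $\|\phi\|_{L^2}^2\le C_{d,q}|z|^{d/(2q)-1}\|u\|_{L^p}\|v\|_{L^p}$.

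To extract the exponential weight, fix $y_0\in\R^d$ and perform a weighted Hölder decomposition of $\|v\|_{L^p}^p=\int|V|^{p/2}|\phi|^p\rd x$. Splitting $|V|^{p/2}|\phi|^p=(|V|^{p/2}\mu)(|\phi|^p\mu^{-1})$ with $\mu(x)=e^{-\im\sqrt z\,|x-y_0|/(q+1)}$ and using Hölder with conjugate exponents $q+1$ and $(q+1)/q$, together with the identities $p(q+1)/2=q$ and $p(q+1)/q=2$, gives
\begin{equation*}
\|v\|_{L^p}\le F(y_0)^{1/(2q)}\,\bigl\|e^{\im\sqrt z\,|\cdot-y_0|/(2q)}\phi\bigr\|_{L^2},
\end{equation*}
and an analogous bound for $\|u\|_{L^p}$, where $F(y_0):=\int|V(x)|^q e^{-\im\sqrt z\,|x-y_0|}\rd x$ is exactly the right-hand side of the theorem. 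Substituting into the KRS bound yields
\begin{equation*}
\|\phi\|_{L^2}^2\le C_{d,q}|z|^{d/(2q)-1}F(y_0)^{1/q}\,\bigl\|e^{\im\sqrt z\,|\cdot-y_0|/(2q)}\phi\bigr\|_{L^2}^2.
\end{equation*}

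The remaining task is to choose $y_0$, depending on $\phi$, so that the weighted $L^2$ norm on the right is controlled by $\|\phi\|_{L^2}$ with a constant depending only on $d$ and $q$. This is a quantitative Combes--Thomas-type decay for the Birman--Schwinger eigenvector: since the weight grows at rate $\im\sqrt z/(2q)$, a strict fraction of the rate $\im\sqrt z$ of the pointwise resolvent decay $|R_z(x,y)|\lesssim e^{-\im\sqrt z\,|x-y|}\cdot(\text{polynomial})$, the fixed-point equation $\phi=-K_z\phi$ transfers the decay of the resolvent kernel into $\phi$ once $y_0$ is taken as a concentration point of $|\phi|^2$. Assuming this step, the three ingredients combine to give $1\le C_{d,q}|z|^{d/(2q)-1}F(y_0)^{1/q}$, which rearranges to $|z|^{q-d/2}\le C_{d,q}^q F(y_0)\le C_{d,q}^q\sup_y F(y)$.

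The main obstacle is precisely this last step: making the Combes--Thomas constant genuinely uniform in $z$ and $V$. Classical proofs give exponential decay with constants degenerating as $z$ approaches $\sigma(-\Delta+V)$, so some care is required to arrange cancellations. A plausible route is to iterate the Birman--Schwinger equation once and bound the resulting convolution against the Yukawa-type kernel $e^{-(1-1/(2q))\im\sqrt z\,|x-y|}\cdot(\text{polynomial})$ by a weighted Young inequality, exploiting that the residual exponent $1-1/(2q)$ is strictly positive throughout the admissible range $q\ge 1$.
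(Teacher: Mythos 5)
You start from the right ingredients---the Birman--Schwinger principle and the Kenig--Ruiz--Sogge bound at the Stein--Tomas exponent $p=2q/(q+1)$---and your weighted H\"older step is correct: the arithmetic $p(q+1)/2=q$, $p(q+1)/q=2$ does hand you
\[
\|\phi\|_{L^2}^2 \le C_{d,q}|z|^{\frac{d}{2q}-1}F(y_0)^{1/q}\,\bigl\|\e^{\im\sqrt{z}|\cdot-y_0|/(2q)}\phi\bigr\|_{L^2}^2.
\]
But the step you flag as the ``main obstacle'' is not a technicality to be filled in later; it is the crux, and I do not see how to close it along the lines you sketch. You need a choice of $y_0$ (depending on $\phi$) for which the weighted norm is bounded by $\|\phi\|_{L^2}$ with a constant \emph{uniform} in $z$ and $V$. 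Iterating $\phi=-K_z\phi$ and peeling off the weight via the triangle inequality only rewrites the same fixed-point relation for $\e^{\im\sqrt{z}|\cdot-y_0|/(2q)}\phi$ against a kernel with the reduced decay rate $(1-1/(2q))\,\im\sqrt{z}$, i.e.\ against an operator whose kernel is pointwise no smaller in absolute value than that of $K_z$; since $\|K_z\|\ge1$ at an eigenvalue, there is no room to absorb, and the bootstrap does not close. Making these constants uniform as $z$ approaches $\sigma(-\Delta+V)$ is precisely the content of the theorem, so it cannot be input into the proof.

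The paper avoids the eigenvector entirely by proving the operator-norm bound \eqref{BS},
\[
\||V|^{1/2}(-\Delta-z)^{-1}|V|^{1/2}\|\le C_{d,q}|z|^{\frac{d}{2q}-1}F_V^q(\im\sqrt{z}),
\]
from which the theorem follows at once by the Birman--Schwinger principle. The mechanism is Stein's complex interpolation on $T_\zeta=|V|^{\zeta/2}(-\Delta-z)^{-\zeta}|V|^{\zeta/2}$ (after scaling to $|z|=1$), between the trivial Plancherel bound at $\re\zeta=0$ and, at $\re\zeta=q$, \emph{Schur's test with the potential-dependent weight} $\rho(x,y)=|V(x)|^{q/2}/|V(y)|^{q/2}$ (after truncating $|V|$ above and below so the weight is legitimate). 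This weight makes both Schur integrals collapse to $\sup_x\int|V(y)|^q\,|K_{z,\zeta}(x-y)|\,\rd y$, which, combined with the pointwise kernel estimate $|K_{z,\zeta}(x-y)|\lesssim\e^{c|\im\zeta|}\e^{-\im\sqrt{z}|x-y|}|x-y|^{-(d+1)/2+\re\zeta}$---retaining the exponential factor rather than estimating it by one, as in the short-range arguments---yields the $F_V^q$-type quantity appearing on the right-hand side of \eqref{DN higher dim}. In effect, the $V$-dependent Schur weight does at the operator level what your H\"older decomposition plus Combes--Thomas claim were supposed to do at the eigenvector level; moving the argument to the operator level is what lets it close. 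If you want to repair your write-up, replace the H\"older-plus-eigenvector-decay step by this Schur-test bound on $|V|^{1/2}(-\Delta-z)^{-1}|V|^{1/2}$ directly and then interpolate with the Plancherel bound.
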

Here, $1+$ denotes an arbitrary number $>1$ (this is only relevant if $d=2$) and the branch of the square root is chosen such that $\im\sqrt{z}>0$.
In the ``short-range" case $V\in L^q(\R^d)$, $q\leq (d+1)/2$, Theorem \ref{thm DN higher dim} recovers the results of \cite{MR2820160}, simply by neglecting the exponential.
In the ``long-range" case $q>(d+1)/2$ an application of H\"older's inequality yields bounds that are close to those of \cite{MR3717979}; see Corollary \ref{cor 1} and the subsequent remark for details. We also mention the recent result of Lee and Seo \cite{MR3865141} where the $L^q$ norm is replaced by the Kerman--Saywer norm. An interesting consequence of our bound \eqref{DN higher dim} that cannot be deduced from those works is that, for a long-range potential $V$, we have the ``local bound"
\begin{align}\label{local bound}
|z|^{\frac{1}{2}}\leq C_d\sup_{y\in\R^d} \int_{\left(B\left(y,\frac{M}{\im\sqrt{z}}\right)\right)} |V(x)|^{\frac{d+1}{2}}\rd x,
\end{align}
where $M$ is some large number depending on $V$ and $z$ (see Corollary \ref{cor 2}). If more is known about the potential than just an $L^q$ norm, then \eqref{local bound} can yield considerably sharper bounds than those previously known in the literature. For example, we show that if $V$ is of ``Ionescu--Jerison" type in the sense discussed in \cite{MR3713021}, then the imaginary part of $z$ must decay exponentially fast as the ``size" of $V$ tends to zero (see Example \ref{example Ionescu--Jerison}). This observation may give a hint whether such a potential is a good candidate to disprove the open part of the so-called Laptev--Safronov conjecture \cite{MR2540070}, which stipulates that
\begin{align}\label{LS conjecture}
\sup_{V\in L^q(\R^d)}\sup_{z\in\sigma(-\Delta+V)\setminus\R_+}\frac{|z|^{q-\frac{d}{2}}}{\|V\|_q^q}<\infty\quad\mbox{for all}\quad q\in[d/2,d].
\end{align}
For the range $q\in[d/2,(d+1)/2]$ the conjecture was proven by Frank \cite{MR2820160}. The question whether \eqref{LS conjecture} is true for $q\in((d+1)/2,d]$ is still open. 
The lower bound for $q$ in \eqref{LS conjecture} is obvious and already appears for real-valued potentials. The conjectured upper bound $q\leq d$ was based on the observation that 
there are examples of potentials, due to Wigner and Von Neumann, that decay like $1/|x|$ at infinity and give rise to embedded eigenvalues. For these potentials $q=d$ would be borderline in terms of integrability. However, there are examples of embedded eigenvalues, due to Ionescu and Jerison \cite{MR2024415}, for non-radial potentials that are in $L^q(\R^d)$ for any $q>(d+1)/2$; see also \cite{MR3713021} for a simplified version of the Ionescu--Jerison example and \cite{arXiv:1709.06989} for additional examples as well as an explanation of the connection to the Knapp example in harmonic analysis. Thus, the expectation is 
that the conjecture is false in the range $q\in((d+1)/2,d]$. That \eqref{LS conjecture} cannot hold for $q>d$, as conjectured by Laptev and Safronov, was proved by B\"ogli \cite{MR3627408}.

Finally we should also mention that there are versions of \eqref{AAD} concerning sums of eigenvalues (e.g.\ \cite{MR2260376,MR2559715,MR3077277,MR3730931,2016arXiv160103122F}), but these  will not be discussed here. Several works also deal with a class of potentials outside the $L^q$-scale (e.g.\ \cite{MR3812810,MR3847475}), Schr\"odinger operators with inverse square potentials \cite{2016arXiv160701727M}, Schr\"odinger operators on conical manifolds with non-trapping metrics \cite{2017arXiv170909759G}, fractional Schr\"odinger and Dirac operators \cite{MR3608659}, to name just a few.

\section{Proof of Theorem \ref{thm DN higher dim}}

To ease notation we define
\begin{align*}
F_V^q(s):=\left(\sup_{y\in\R^d}\int_{\R^d}|V(x)|^{q}\exp(-s|x-y|)\rd x\right)^{\frac{1}{q}}.
\end{align*}
The first step is to reduce the proof of Theorem \ref{thm DN higher dim} to the Birman-Schwinger bound
\begin{align}\label{BS}
\||V|^{\frac{1}{2}}(-\Delta-z)^{-1}|V|^{\frac{1}{2}}\|\leq C_{d,q}|z|^{\frac{d}{2q}-1}F_V^q(\im\sqrt{z}).
\end{align}
We first prove that it is sufficient to establish \eqref{BS} for $|z|=1$, then reduce the proof to a pointwise bound. 
\begin{lemma}[Scaling]
If \eqref{BS} holds for $|z|=1$, then it holds for all $z\in\C$.
\end{lemma}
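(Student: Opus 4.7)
The plan is to use the natural $L^2$-isometric dilation of $\R^d$ to rescale the spectral parameter to the unit circle, keep track of how the Birman--Schwinger operator and the functional $F_V^q$ transform, and then apply the assumed bound at $|z|=1$.

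Concretely, fix $z\in\C$, write $\lambda\coloneqq|z|^{1/2}$ and $\zeta\coloneqq z/|z|$, so that $|\zeta|=1$ and $z=\lambda^2\zeta$. Introduce the unitary $D_\lambda$ on $L^2(\R^d)$ given by $(D_\lambda f)(x)=\lambda^{d/2}f(\lambda x)$. A direct computation shows $D_\lambda^{-1}(-\Delta)D_\lambda=\lambda^2(-\Delta)$, and for the multiplication operator by $V$ one has $D_\lambda^{-1}M_V D_\lambda=M_W$ with $W(x)\coloneqq V(x/\lambda)$. Combining these identities yields
\begin{equation*}
D_\lambda^{-1}\bigl[|V|^{1/2}(-\Delta-z)^{-1}|V|^{1/2}\bigr]D_\lambda=\lambda^{-2}\,|W|^{1/2}(-\Delta-\zeta)^{-1}|W|^{1/2},
\end{equation*}
so by unitary invariance the operator norm on the left equals $\lambda^{-2}$ times the norm on the right.

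Next I would apply the hypothesised bound \eqref{BS} at the point $\zeta$ on the unit circle (where $|\zeta|^{d/(2q)-1}=1$) with potential $W$, obtaining
\begin{equation*}
\bigl\||V|^{1/2}(-\Delta-z)^{-1}|V|^{1/2}\bigr\|\leq C_{d,q}\,\lambda^{-2}\,F_W^q(\im\sqrt{\zeta}).
\end{equation*}
Now I compute the scaling of $F_V^q$: changing variables $u=x/\lambda$ in the integral defining $F_W^q(s)$ (and $v=y/\lambda$ in the supremum) gives
\begin{equation*}
F_W^q(s)^q=\lambda^d\sup_{v\in\R^d}\int_{\R^d}|V(u)|^q\exp(-\lambda s|u-v|)\rd u=\lambda^d F_V^q(\lambda s)^q.
\end{equation*}
Using the homogeneity of the square root, $\lambda\im\sqrt{\zeta}=\im\sqrt{\lambda^2\zeta}=\im\sqrt{z}$, so $F_W^q(\im\sqrt{\zeta})=\lambda^{d/q}F_V^q(\im\sqrt{z})$. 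Plugging this back and collecting the powers of $\lambda=|z|^{1/2}$ gives exactly the factor $|z|^{d/(2q)-1}F_V^q(\im\sqrt{z})$ required by \eqref{BS} at general $z$.

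There is no real obstacle here; the only thing to be careful about is the consistent bookkeeping of powers of $\lambda$ and, in particular, verifying the identity $\lambda\,\im\sqrt{\zeta}=\im\sqrt{z}$ on the chosen branch of the square root. Everything else is a direct calculation with the $L^2$-dilation.
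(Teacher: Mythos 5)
Your argument is correct and is essentially identical to the paper's proof: both use the $L^2$-dilation to rewrite the Birman--Schwinger norm as $|z|^{-1}$ times the norm at $\zeta=z/|z|$ with rescaled potential, and then the scaling identity $F^q_{V(\cdot/\sqrt{|z|})}(s)=|z|^{d/(2q)}F_V^q(s\sqrt{|z|})$ together with $\lambda\,\im\sqrt{\zeta}=\im\sqrt{z}$. The paper merely states these two identities as ``one easily checks''; you have spelled out the same computation.
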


\begin{proof}
One easily checks that
\begin{align*}
\||V|^{\frac{1}{2}}(-\Delta-z)^{-1}|V|^{\frac{1}{2}}\|=|z|^{-1}\||V(\cdot/\sqrt{|z|})|^{\frac{1}{2}}(-\Delta-z/|z|)^{-1}|V(\cdot/\sqrt{|z|})|^{\frac{1}{2}}\|.
\end{align*}
Hence, if \eqref{BS} held for $|z|=1$ this would imply that
\begin{align*}
\||V|^{\frac{1}{2}}(-\Delta-z)^{-1}|V|^{\frac{1}{2}}\|\leq C_{d,q}|z|^{-1}F^q_{V(\cdot/\sqrt{|z|})}(\im\sqrt{z/|z|}).
\end{align*}
Since 
\begin{align*}
F^q_{V(\cdot/\sqrt{|z|})}(s)=|z|^{\frac{d}{2q}}F_V^q(s\sqrt{|z|}),
\end{align*}
the claim follows.
\end{proof}

\begin{lemma}[Pointwise bounds]
Let $z,\zeta\in\C$, with $|z|=1$, $\im \,z\neq 0$ and $\re\,\zeta\in [d/2,(d+1)/2]$. Then the kernel $K_{z,\zeta}$ of $(-\Delta-z)^{-\zeta}$ satisfies the bound
\begin{align}\label{pointwise bound}
|K_{z,\zeta}(x-y)|\leq C\e^{c|\im\zeta|}\e^{-\im\sqrt{z}|x-y|}|x-y|^{-\frac{d+1}{2}+\re\,\zeta}.
\end{align}
\end{lemma}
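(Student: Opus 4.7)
The strategy is to derive the kernel $K_{z,\zeta}$ in closed form as a classical Bessel potential and then extract the desired bound from asymptotics of modified Bessel functions of the second kind. Set $s:=-\I\sqrt{z}$; the hypotheses $|z|=1$ and $\im\sqrt{z}>0$ force $|s|=1$ and $\re s=\im\sqrt{z}>0$, so that $(-\Delta-z)^{-\zeta}=(s^2-\Delta)^{-\zeta}$. For $z$ on the negative real axis (so $s>0$) the classical Bessel potential identity
\begin{equation*}
K_{z,\zeta}(x)=\frac{c_d\,2^{-\zeta}}{\Gamma(\zeta)}\,s^{d/2-\zeta}|x|^{\zeta-d/2}K_{d/2-\zeta}(s|x|)
\end{equation*}
holds, where $c_d$ is a dimensional constant and $K_\nu$ is the modified Bessel function of the second kind; by analytic continuation in $s$ this identity persists throughout the sector $\re s>0$.

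With the formula in hand, the proof reduces to asymptotic estimates on $K_\nu(w)$ at $\nu=d/2-\zeta$ and $w=s|x|$. Since $|s|=1$, I split the analysis at $|x|=1$. For $|x|\geq 1$, the large-argument expansion $K_\nu(w)=\sqrt{\pi/(2w)}\,\e^{-w}(1+O(1/w))$, valid uniformly in $\re\nu$ over compact sets for $|\arg w|<\pi/2$, gives $|K_\nu(s|x|)|\leq C|x|^{-1/2}\e^{-(\im\sqrt{z})|x|}$; multiplying by $|x|^{\zeta-d/2}$ produces precisely $|x|^{\re\zeta-(d+1)/2}\e^{-(\im\sqrt{z})|x|}$, as claimed. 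For $|x|\leq 1$, the small-argument behaviour $K_\nu(w)=O(|w|^{-|\re\nu|})$ (with a logarithmic correction when $\re\nu=0$) combines with $|x|^{\zeta-d/2}$ to yield an essentially bounded contribution, which is dominated by the right-hand side of \eqref{pointwise bound} because $\re\zeta-(d+1)/2\in[-1/2,0]$ forces that upper bound to blow up as $|x|\to 0$.

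The main obstacle is tracking the $\im\zeta$-dependence and verifying that every constant grows at worst like $\e^{c|\im\zeta|}$. Three sources contribute: (i) Stirling's formula gives $|\Gamma(\zeta)^{-1}|\leq C\e^{(\pi/2)|\im\zeta|}$ for $\re\zeta$ in a compact set; (ii) $|s^{d/2-\zeta}|\leq C\e^{(\pi/2)|\im\zeta|}$ since $|\arg s|<\pi/2$; (iii) one needs uniformity in $\im\nu$ of the Bessel asymptotics. For (iii) I would invoke the Schl\"afli integral representation
\begin{equation*}
K_\nu(w)=\int_0^\infty\e^{-w\cosh t}\cosh(\nu t)\rd t,\qquad\re w>0,
\end{equation*}
giving $|K_\nu(w)|\leq\int_0^\infty\e^{-(\re w)\cosh t+|\im\nu|t}\rd t$; a routine Laplace-type estimate bounds this by $C\e^{c|\im\nu|}|w|^{-1/2}\e^{-\re w}$ in the regime of interest. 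Combining the three contributions delivers the overall factor $\e^{c|\im\zeta|}$ in \eqref{pointwise bound}.
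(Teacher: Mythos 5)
Your overall strategy---express $K_{z,\zeta}$ via the Bessel potential formula in terms of the modified Bessel function $K_\nu$ with $\nu=d/2-\zeta$, split at $|x|=1$, apply small-/large-argument Bessel asymptotics, and control the $\im\zeta$-dependence through Stirling's formula and the prefactor $s^{d/2-\zeta}$---is the same route the paper takes by citing (2.21)--(2.27) of Kenig--Ruiz--Sogge \cite{MR894584} and the appendix of Lee--Seo \cite{MR3865141}. The framework is correct, but there is a genuine gap in the way you try to make the large-$|x|$ regime rigorous.

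The problem is with your use of the Schl\"afli representation with absolute values taken inside the integral. First, a minor point: $|\cosh(\nu t)|\le\cosh(\re\nu\,t)$, not $\e^{|\im\nu|t}$, so the inequality you write is not correct as stated (though the correct version is actually better, since it involves only $\re\nu$, which is bounded). The more serious issue is the quality of the decay this gives. After bounding $|\cosh(\nu t)|\le\cosh(\re\nu\,t)\lesssim \e^{t/2}$ and completing the square in the Laplace estimate, one obtains
\begin{equation*}
|K_\nu(w)|\ \lesssim\ (\re w)^{-1/2}\,\e^{-\re w},\qquad \re w\gtrsim 1,
\end{equation*}
and \emph{not} $|w|^{-1/2}\e^{-\re w}$. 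The two coincide only when $\arg w$ stays away from $\pm\pi/2$. But in the present setting $w=s|x|$ with $s=-\I\sqrt z$, $|s|=1$, and $\re s=\im\sqrt z$ can be arbitrarily small; then $\arg w$ is close to $\pi/2$ and $\re w\ll |w|$. In that regime the absolute-value Schl\"afli bound is off by the factor $(|w|/\re w)^{1/2}=(\im\sqrt z)^{-1/2}$, which is exactly the loss that \eqref{pointwise bound} does not permit. The genuine $|w|^{-1/2}$ decay near the imaginary axis comes from the oscillation of $K_\nu$ (equivalently, the Hankel-function asymptotics $H_\nu^{(1)}$, $H_\nu^{(2)}$), which is destroyed once you take absolute values inside the integral; recovering it requires either the full asymptotic expansion uniformly up to $|\arg w|\le\pi-\delta$ with an explicit error depending polynomially on $\nu$, or a stationary-phase/steepest-descent argument. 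This is precisely the content of the cited Bessel-function estimates, so your sketch should defer to (or reproduce) those, rather than to the crude Schl\"afli bound.

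For the same reason, the statement that the large-argument expansion is "valid uniformly in $\re\nu$ over compact sets" misidentifies the source of difficulty: $\re\nu$ is bounded throughout, and what must be controlled is the growth of constants in $|\im\nu|$, together with the uniformity up to $\arg w\to\pm\pi/2$. Both are handled in \cite{MR894584} and in the appendix of \cite{MR3865141}, which is why the paper simply cites them. Finally, your small-$|x|$ discussion is fine in spirit, but note that the "logarithmic correction" occurs only at $\nu=0$ exactly; for $\re\nu=0$, $\im\nu\neq 0$ the issue is instead the factor $1/\sin(\pi\nu)$, which is harmless here since it decays as $|\im\nu|\to\infty$.
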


\begin{proof}
This follows from the explicit formula for the kernel $K_{z,\zeta}$ and standard Bessel function estimates, see e.g.\ (2.21)--(2.27) in \cite{MR894584} or the proof of (2.5) in the appendix of \cite{MR3865141} (where the estimate is in fact proved for the larger range $\re\,\zeta\in [(d-1)/2,(d+1)/2]$, but this will not be needed here). In both references the (second) exponential factor in \eqref{pointwise bound} is simply estimated by one. 
\end{proof}

We need the following simple version of Schur's test. The proof follows from a routine application of the Cauchy-Schwarz inequality and is omitted.

\begin{lemma}[Schur's test]
Let $\rho:\R^d\times \R^d\to (0,\infty)$. Suppose that $T$ is an operator on $L^2(\R^d)$ with locally integrable kernel $K$. Then
\begin{align*}
\|T\|_{L^2\to L^2}\leq \left(\sup_{x\in\R^d}\int_{\R^d}|K(x,y)|\rho(x,y)^{-1}\rd y\right)^{\frac{1}{2}}\left(\sup_{y\in\R^d}\int_{\R^d}|K(x,y)|\rho(x,y)\rd x\right)^{\frac{1}{2}}.
\end{align*} 
\end{lemma}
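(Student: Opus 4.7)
The plan is a standard weighted Cauchy--Schwarz argument. First I would split the absolute value of the kernel symmetrically against $\rho$, writing
\[
|K(x,y)| = |K(x,y)|^{1/2}\rho(x,y)^{-1/2}\cdot |K(x,y)|^{1/2}\rho(x,y)^{1/2},
\]
and then apply the Cauchy--Schwarz inequality in the $y$-variable to the pointwise bound $|Tf(x)|\le\int|K(x,y)||f(y)|\rd y$. This produces
\[
|Tf(x)|^2\le\Bigl(\int|K(x,y)|\rho(x,y)^{-1}\rd y\Bigr)\Bigl(\int|K(x,y)|\rho(x,y)|f(y)|^2\rd y\Bigr).
\]

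Next I would integrate in $x$. The first parenthesis is dominated by the supremum over $x$ appearing in the statement of the lemma, which pulls out of the integral as a constant. For the second parenthesis I would invoke Fubini to interchange the order of integration in $x$ and $y$; the inner $x$-integral $\int|K(x,y)|\rho(x,y)\rd x$ is then bounded by its supremum in $y$, which again pulls out, leaving exactly $\|f\|_{L^2}^2$. Taking a square root yields the asserted operator-norm bound.

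There is no genuine obstacle here; the argument is entirely mechanical once the symmetric weighted factorization of $|K|$ is in place. The only minor points requiring care are the justification of Fubini (immediate from local integrability of $K$ together with positivity of $\rho$) and the trivial remark that one may assume both suprema on the right-hand side are finite, since otherwise the inequality is vacuous.
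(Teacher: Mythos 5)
Your argument is correct and is precisely the routine weighted Cauchy--Schwarz computation that the paper alludes to (the paper states the lemma and explicitly omits the proof as a routine application of Cauchy--Schwarz). Nothing is missing; the split $|K|=|K|^{1/2}\rho^{-1/2}\cdot|K|^{1/2}\rho^{1/2}$, Cauchy--Schwarz in $y$, Tonelli/Fubini in $x$, and the two sup bounds give exactly the stated estimate.
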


\begin{proof}[Proof of Theorem \ref{thm DN higher dim}]
We will apply Stein's complex interpolation theorem (see e.g.\ \cite{MR0304972} for a textbook reference) to the analytic family of operators
\begin{align*}
T_{\zeta}=|V|^{\frac{\zeta}{2}}(-\Delta-z)^{-\zeta}|V|^{\frac{\zeta}{2}},
\end{align*}
where $|z|=1$ and $0\leq \re\,\zeta\leq (d+1)/2$ (see also \cite{MR3730931,MR3608659,MR3717979,MR3865141} where the same family is considered). It suffices to prove the bounds
\begin{align*}
\|T_{\zeta}\|_{L^2\to L^2}\leq C\e^{c|\im\zeta|}\quad \mbox{for  } &\re\,\zeta=0,\\
\|T_{\zeta}\|_{L^2\to L^2}\leq C\e^{c|\im\zeta|}F_V^q(\im\sqrt{z})\quad \mbox{for  } &\re\,\zeta=q 
\end{align*}
The first bound immediately follows from Plancherel's theorem (see e.g.\ the proof of (2.3) in \cite{MR3865141}). The second bound follows from \eqref{pointwise bound} and Schur's test with 
\begin{align*}
\rho(x,y)=\frac{|V(x)|^{\frac{q}{2}}}{|V(y)|^{\frac{q}{2}}}.
\end{align*}
To be precise, we first truncate $|V|$ from above and below, so that $\rho$ and $\rho^{-1}$ are bounded. The truncation can be removed at the end.
\end{proof}

\section{Consequences of Theorem \ref{thm DN higher dim}}

\begin{corollary}\label{cor 1}
Let $d\geq 2$ and $V\in L^{q}(\R^d)$ for some $q\geq (d+1)/2$. Then there exists $C_{d,q}>0$ such that any eigenvalue $z\in\C$ of $-\Delta+V$ satisfies
\begin{align}\label{bound cor 1}
|z|^{\frac{1}{d+1}}(\im\sqrt{z})^{d\left(\frac{2}{d+1}-\frac{1}{q}\right)}\leq C_{d,q} \|V\|_q
\end{align}
\end{corollary}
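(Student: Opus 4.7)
The plan is to apply Theorem~\ref{thm DN higher dim} at the upper endpoint exponent $q_0 = (d+1)/2$ and then bound the exponential integral by H\"older's inequality so that the full $L^q$ norm of $V$ appears.

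First, note that $q_0 = (d+1)/2$ lies in the admissible range of Theorem~\ref{thm DN higher dim} for every $d \geq 2$ (at the top end), so the theorem yields
\begin{align*}
|z|^{\frac{1}{2}} \leq C_d \sup_{y \in \R^d} \int_{\R^d} |V(x)|^{\frac{d+1}{2}} \exp(-\im\sqrt{z}\,|x-y|)\rd x.
\end{align*}
Next, since $q \geq (d+1)/2$, the exponents $p_1 = \frac{2q}{d+1}$ and $p_2 = \frac{2q}{2q-(d+1)}$ are H\"older-conjugate, and $|V|^{(d+1)/2} \in L^{p_1}$ with $\||V|^{(d+1)/2}\|_{p_1} = \|V\|_q^{(d+1)/2}$. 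Applying H\"older in $x$ for each fixed $y$ gives
\begin{align*}
\int_{\R^d} |V(x)|^{\frac{d+1}{2}} \e^{-\im\sqrt{z}\,|x-y|}\rd x
\leq \|V\|_q^{\frac{d+1}{2}} \left(\int_{\R^d} \e^{-p_2 \im\sqrt{z}\,|x-y|}\rd x\right)^{\!1/p_2}.
\end{align*}
The remaining integral is translation-invariant in $y$ and evaluates, by a standard scaling change of variables $u = p_2 \im\sqrt{z}\,(x-y)$, to a constant multiple of $(p_2 \im\sqrt{z})^{-d}$.

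Combining these bounds yields
\begin{align*}
|z|^{\frac{1}{2}} \leq C_{d,q} \|V\|_q^{\frac{d+1}{2}} (\im\sqrt{z})^{-d/p_2},
\end{align*}
and then raising both sides to the power $2/(d+1)$ and moving the $\im\sqrt{z}$ factor to the left produces an inequality whose exponent on $\im\sqrt{z}$ is $\frac{2d}{p_2(d+1)} = \frac{d(2q-(d+1))}{q(d+1)} = d\bigl(\tfrac{2}{d+1}-\tfrac{1}{q}\bigr)$, matching the statement. No step poses a serious obstacle; the only thing to check carefully is the algebra that converts $d/p_2$ into the form displayed in \eqref{bound cor 1}.
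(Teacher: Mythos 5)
Your argument is correct and follows exactly the paper's route: apply Theorem~\ref{thm DN higher dim} with exponent $q=(d+1)/2$ and then separate the exponential by H\"older's inequality, computing the Gaussian-free exponential integral by scaling. The paper states this in a single line, while you have verified the exponent bookkeeping; the endpoint $q=(d+1)/2$ (where $p_2=\infty$) is also consistent, since the exponent on $\im\sqrt{z}$ then vanishes as it should.
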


\begin{proof}
This follows from \eqref{DN higher dim} with $q=(d+1)/2$ by H\"older's inequality.
\end{proof}

\begin{remark}
Since $\im\sqrt{z}\gtrsim \dist(z,\R_+)/\sqrt{|z|}$ it immediately follows from \eqref{bound cor 1} that
\begin{align}\label{bound cor of cor 1}
|z|^{\frac{1}{d+1}-\frac{d}{2}\left(\frac{2}{d+1}-\frac{1}{q}\right)}\dist(z,\R_+)^{d\left(\frac{2}{d+1}-\frac{1}{q}\right)}\leq C_{d,q} \|V\|_q.
\end{align}
This estimate is not so good for large $q$; for example, for $q=\infty$ the trivial bound
\begin{align}\label{trivial spectral bound}
\dist(z,\R_+)\leq \|V\|_{\infty}
\end{align}
easily beats \eqref{bound cor 1} since $\dist(z,\R_+)\leq |z|$. In terms of the Birman-Schwinger operator the inequality leading to \eqref{trivial spectral bound} is of course
\begin{align}\label{trivial BS bound}
\||V|^{\frac{1}{2}}(-\Delta-z)^{-1}|V|^{\frac{1}{2}}\|\leq \dist(z,\R_+)^{-1}\|V\|_{\infty}.
\end{align}
Since the right hand side of \eqref{BS} is clearly bounded by $|z|^{\frac{1}{d+1}}\|V\|_{d+1}$, complex interpolation between \eqref{BS} and \eqref{trivial BS bound} yields
\begin{align}\label{Ruperts bound}
|z|^{\frac{1}{2q}}\dist(z,\R_+)^{1-\frac{d+1}{2q}}\leq C_{d,q} \|V\|_q
\end{align}
for $q\geq (d+1)/2$. This bound was proved by Frank \cite{MR3717979}, and we refer to this paper for the details of the complex interpolation.
\end{remark}

\begin{corollary}\label{cor 2}
Let $d\geq 2$ and $V\in L^{q}(\R^d)$ for some $q> (d+1)/2$. If $z\in\C$ is an eigenvalue of $-\Delta+V$, then there exists $C_d'>0$ such that for any $M\geq 0$ satisfying
\begin{align}\label{asspt on M}
M\geq (d+1)\ln\|V\|_q-2d\beta_q^{-1}\ln(\beta_q\,\im\sqrt{z})-\ln|z|+(d+1)\ln (2C_d'),
\end{align}
where $\beta_q^{-1}=1-(d+1)/(2q)$, the bound
\begin{align*}
|z|^{\frac{1}{d+1}}\leq 2C_d'\sup_{y\in\R^d} \|V\|_{L^{\frac{d+1}{2}}\left(B\left(y,\frac{M}{\im\sqrt{z}}\right)\right)}
\end{align*}
holds. 
\end{corollary}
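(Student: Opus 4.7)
Write $s:=\im\sqrt{z}$ and $R:=M/s$, and apply Theorem~\ref{thm DN higher dim} with the sharp exponent $q=(d+1)/2$ (admissible for $d\ge 2$). This gives
\[
|z|^{1/2}\le C_d\sup_{y\in\R^d}\bigl(I_1(y)+I_2(y)\bigr),
\]
where $I_1(y)$ and $I_2(y)$ denote the integrals of $|V(x)|^{(d+1)/2}\e^{-s|x-y|}$ over $B(y,R)$ and its complement, respectively. The local part satisfies $I_1(y)\le\|V\|_{L^{(d+1)/2}(B(y,R))}^{(d+1)/2}$, which is already the object the corollary asks to appear.

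For the tail $I_2(y)$ the plan is to apply H\"older with conjugate exponents $p=2q/(d+1)$ and $p'=\beta_q$, both admissible precisely because $q>(d+1)/2$. Since $\||V|^{(d+1)/2}\|_p=\|V\|_q^{(d+1)/2}$, passing to radial coordinates and substituting $u=\beta_q sr$, together with the standard incomplete-gamma bound $\int_A^\infty u^{d-1}\e^{-u}\rd u\le C_dA^{d-1}\e^{-A}$ (valid for $A=\beta_q M\ge d-1$), produces
\[
I_2(y)\le C_d\|V\|_q^{(d+1)/2}(\beta_q s)^{-d/\beta_q}M^{(d-1)/\beta_q}\e^{-M}.
\]

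The endgame is to choose $M$ large enough that $C_d\sup_y I_2(y)\le\tfrac12|z|^{1/2}$; the tail is then absorbed into the left-hand side, yielding $|z|^{1/2}\le 2C_d\sup_y\|V\|_{L^{(d+1)/2}(B(y,R))}^{(d+1)/2}$, and raising to the $2/(d+1)$ power gives the corollary with $2C_d':=(2C_d)^{2/(d+1)}$. Taking logarithms of the absorption condition produces a linear lower bound on $M$ plus a spurious $(d-1)\beta_q^{-1}\ln M$ term coming from the polynomial prefactor; absorbing this via the crude inequality $M^{(d-1)/\beta_q}\e^{-M}\le\e^{-M/2}$, valid beyond an absolute dimensional threshold, effectively doubles all coefficients and yields exactly the form of \eqref{asspt on M}. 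The main obstacle is purely bookkeeping: verifying that the remaining constants (the factor $(\beta_q s)^{-d/\beta_q}$, the $\ln\beta_q$ implicit in $-2d\beta_q^{-1}\ln(\beta_q s)$, the absolute threshold from the polynomial absorption, and the $\tfrac12$) can all be packaged into a single $(d+1)\ln(2C_d')$ for an appropriate dimensional constant $C_d'$.
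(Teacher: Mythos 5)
Your proposal matches the paper's proof: both split the integral in \eqref{DN higher dim} with $q=(d+1)/2$ into $B(y,M/\im\sqrt z)$ and its complement, estimate the exponential by $1$ on the ball, apply H\"older with conjugate exponents $2q/(d+1)$ and $\beta_q$ on the tail, and absorb the tail term using the logarithmic lower bound \eqref{asspt on M} on $M$. The only cosmetic difference is the stage at which the $2/(d+1)$ power is taken and exactly how the polynomial prefactor from the Gamma-tail is traded against the exponential; the resulting condition on $M$ and constant $C_d'$ are the same.
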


\begin{proof}
We split the integral in \eqref{DN higher dim} (again with $q=(d+1)/2$) into a region $|x-y|\leq M/\im\sqrt{z}$ and its complement. Estimating the exponential factor by $1$ in the first region and using H\"older in the second yields  
\begin{align*}
|z|^{\frac{1}{d+1}}\leq C_d'
\left(\sup_{y\in\R^d} \|V\|_{L^{\frac{d+1}{2}}\left(B\left(y,\frac{M}{\im\sqrt{z}}\right)\right)}+\e^{-\frac{M}{d+1}}(\beta_q\,\im\sqrt{z})^{-d\left(\frac{2}{d+1}-\frac{1}{q}\right)}\|V\|_q
\right)
\end{align*}
for some constant $C_d'>0$ that is a multiple of $C_{d,\frac{d+1}{2}}$ in \eqref{DN higher dim}. By the choice of $M$ in \eqref{asspt on M} the second term is at most half the size of the left hand side and can thus be absorbed.
\end{proof}

\section{A sharp bound for quasimodes}

In this section we prove a statement that is slightly stronger than that in \cite{MR3717979}. Example~\ref{example sharp quasimode} below shows that this stronger version is sharp. Since \eqref{DN higher dim} is scale-invariant, we may assume that $|z|=1$ in the following. We consider the following generalized eigenvalue or quasimode equation,
\begin{align}\label{quasimode equation}
(-\Delta+V_n-z_n)\psi_n=g_n,
\end{align}
where $g_n$ is a suitably small error, made precise in \eqref{quasimode condition} below.

\begin{proposition}\label{prop. quasimode}
Let $\epsilon_n$ be a sequence of positive numbers tending to zero, and let $z_n$ be a sequence of complex numbers with $|z_n|\approx 1$ and $\im\,z_n=\epsilon_n$. Assume that there exist functions $\psi_n\in H^2(\R^d)$, $g_n\in L^2(\R^d)$ and $V_n\in L^q(\R^d)$, for some $q>(d+1)/2$, such that \eqref{quasimode equation} holds. Then there exists $\delta>0$ such that if\footnote{Note that $\|V_n^{\frac{1}{2}}\psi_n\|_2$ is finite in view of Sobolev embedding.} 
\begin{align}\label{quasimode condition}
\limsup_{n\to\infty}\epsilon_n^{\frac{d+1}{4q}-1}\|V_n\|_q^{\frac{1}{2}}\frac{\|g_n\|_2}{\|V_n^{\frac{1}{2}}\psi_n\|_2}\leq \delta,
\end{align}
then the bound
\begin{align}\label{quasimode bound}
\epsilon_n^{1-\frac{d+1}{2q}}\leq C_{d,q} \|V_n\|_q
\end{align}
holds for sufficiently large $n$.
\end{proposition}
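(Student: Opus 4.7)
The plan is to adapt the Birman--Schwinger strategy behind Theorem~\ref{thm DN higher dim} and \eqref{Ruperts bound} to the quasimode equation \eqref{quasimode equation}, tracking how the error $g_n$ propagates. Writing the polar decomposition $V_n=U_n|V_n|$ with $U_n$ a multiplication operator of modulus $1$, and setting $\phi_n:=|V_n|^{1/2}\psi_n$ (which lies in $L^2$ by the Sobolev embedding mentioned in the footnote), one inverts $-\Delta-z_n$ in \eqref{quasimode equation} and multiplies by $|V_n|^{1/2}$ to obtain
\begin{equation*}
(I+K_n)\phi_n=|V_n|^{1/2}(-\Delta-z_n)^{-1}g_n,\qquad K_n:=|V_n|^{1/2}(-\Delta-z_n)^{-1}U_n|V_n|^{1/2}.
\end{equation*}
Taking $L^2$ norms yields
\begin{equation*}
(1-\|K_n\|)\|\phi_n\|_2\ \leq\ \||V_n|^{1/2}(-\Delta-z_n)^{-1}\|_{L^2\to L^2}\,\|g_n\|_2,
\end{equation*}
and the strategy is to show that under \eqref{quasimode condition} the right-hand side is small compared to $\|\phi_n\|_2$, which forces $\|K_n\|$ to be bounded below by an absolute constant; combining this with an \emph{upper} bound on $\|K_n\|$ from \eqref{Ruperts bound} then delivers \eqref{quasimode bound}.

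The upper bound is furnished by \eqref{Ruperts bound} applied at $|z_n|\approx 1$ with $\dist(z_n,\R_+)\approx\epsilon_n$:
\begin{equation*}
\|K_n\|,\ \||V_n|^{1/2}(-\Delta-z_n)^{-1}|V_n|^{1/2}\|\ \leq\ C_{d,q}\,\epsilon_n^{(d+1)/(2q)-1}\|V_n\|_q.
\end{equation*}
To handle the ``half-power'' quantity $\||V_n|^{1/2}(-\Delta-z_n)^{-1}\|_{L^2\to L^2}$, I would use $\|A\|^2=\|AA^*\|$ together with the resolvent identity $(-\Delta-z_n)^{-1}(-\Delta-\bar z_n)^{-1}=(2i\epsilon_n)^{-1}[(-\Delta-z_n)^{-1}-(-\Delta-\bar z_n)^{-1}]$ to reduce to the full Birman--Schwinger operator:
\begin{equation*}
\||V_n|^{1/2}(-\Delta-z_n)^{-1}\|_{L^2\to L^2}^2\ \leq\ \epsilon_n^{-1}\,\||V_n|^{1/2}(-\Delta-z_n)^{-1}|V_n|^{1/2}\|\ \leq\ C_{d,q}\,\epsilon_n^{(d+1)/(2q)-2}\|V_n\|_q.
\end{equation*}
Inserting this into the inequality of the previous paragraph gives
\begin{equation*}
1-\|K_n\|\ \leq\ C_{d,q}^{1/2}\,\epsilon_n^{(d+1)/(4q)-1}\|V_n\|_q^{1/2}\,\frac{\|g_n\|_2}{\|\phi_n\|_2}.
\end{equation*}

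By the hypothesis \eqref{quasimode condition} the right-hand side is at most $C_{d,q}^{1/2}\delta$ in the limit, so choosing $\delta$ small enough that $C_{d,q}^{1/2}\delta<1/2$ forces $\|K_n\|\geq 1/2$ for all sufficiently large $n$. Combined with the upper bound $\|K_n\|\leq C_{d,q}\epsilon_n^{(d+1)/(2q)-1}\|V_n\|_q$, this immediately gives $\epsilon_n^{1-(d+1)/(2q)}\leq 2C_{d,q}\|V_n\|_q$, which is \eqref{quasimode bound}. The only real obstacle is a mild bookkeeping issue: \eqref{Ruperts bound} and \eqref{BS} are stated for the unitary-free operator $|V_n|^{1/2}(-\Delta-z_n)^{-1}|V_n|^{1/2}$ rather than for $K_n$. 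This is dispatched by observing that the Stein interpolation proof of \eqref{BS}, as well as the complex interpolation with \eqref{trivial BS bound} that yields \eqref{Ruperts bound}, both rely only on Schur's test with weight $\rho(x,y)=|V_n(x)|^{q/2}/|V_n(y)|^{q/2}$ and on the pointwise resolvent kernel bound \eqref{pointwise bound}; neither step sees the phase of $V_n$, so inserting the bounded multiplication $U_n$ in the middle of the BS operator does not alter the resulting operator-norm estimate.
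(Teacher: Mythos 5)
Your proof is correct and follows the same overall strategy as the paper's (rewrite the quasimode equation in Birman--Schwinger form, absorb the error term using the hypothesis~\eqref{quasimode condition}, then invoke the operator bound underlying~\eqref{Ruperts bound}). The one genuine difference is in how you estimate the half-sandwiched resolvent $\||V_n|^{1/2}(-\Delta-z_n)^{-1}\|_{L^2\to L^2}$. The paper establishes this through Corollary~\eqref{fg}, which rests on Lemma~\ref{lemma 2-pc}, i.e.\ on the Stein--Tomas restriction theorem together with Stein's complex interpolation between $\re\zeta=0$ and $\re\zeta=2q/(d+1)$. You instead use the $TT^*$ identity $\|A\|^2=\|AA^*\|$ and the first resolvent identity to reduce to the full Birman--Schwinger operator, for which the bound is already available from the interpolation leading to~\eqref{Ruperts bound}. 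Both routes land on exactly the same estimate $\||V_n|^{1/2}(-\Delta-z_n)^{-1}\|\lesssim \epsilon_n^{-1+(d+1)/(4q)}\|V_n\|_q^{1/2}$; yours has the small advantage of not requiring a separate Stein--Tomas input, since the needed kernel information is already baked into the Schur-test proof of~\eqref{BS}. (As an aside, this $TT^*$ reduction is essentially how one would prove the relevant restriction-type estimate anyway, since $\im(-\Delta-z_n)^{-1}$ is a smoothed-out surface measure.)

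One minor simplification on the bookkeeping point you raise at the end: since $U_n$ and $|V_n|^{1/2}$ are both multiplication operators they commute, so $K_n=|V_n|^{1/2}(-\Delta-z_n)^{-1}|V_n|^{1/2}\,U_n$ with $U_n$ unitary (after setting $U_n=1$ on the zero set of $V_n$), and hence $\|K_n\|$ is \emph{equal} to the norm of the unitary-free Birman--Schwinger operator. There is no need to re-run the Schur test or the interpolation with $U_n$ inserted; the equality is immediate.
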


\begin{remark}
If $g_n=0$, then condition \eqref{quasimode condition} is void, and we recover the eigenvalue bound \eqref{Ruperts bound} for fixed $n$.
\end{remark}

For the proof of Proposition \ref{prop. quasimode} we will need the following consequence of the Stein--Tomas theorem.

\begin{lemma}\label{lemma 2-pc}
Let $n\geq 2$. Then, for $\lambda^{-1}\leq \epsilon\leq 1$, we have 
\begin{align}\label{2-pc}
\|(-\Delta-(\lambda+\I\epsilon)^2)^{-1}\|_{L^2\to L^{p_c}}\lesssim \epsilon^{-\frac{1}{2}}\lambda^{\frac{1}{p_c}-1}\quad (p_c=2(d+1)/(d-1)).
\end{align}
\end{lemma}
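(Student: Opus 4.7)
The strategy is to invoke the Stein--Tomas restriction theorem via a polar decomposition of the resolvent in frequency space. For Schwartz $f$ one has, by Fourier inversion and polar coordinates,
\begin{align*}
(-\Delta-(\lambda+\I\epsilon)^2)^{-1} f(x) = (2\pi)^{-d}\int_0^\infty \frac{r^{d-1}}{r^2-(\lambda+\I\epsilon)^2}\, (E_r f)(x)\,\rd r,
\end{align*}
where $(E_r f)(x)=\int_{S^{d-1}}\hat{f}(r\omega)\e^{\I r\omega\cdot x}\,\rd\omega$ is the Fourier extension operator on $rS^{d-1}$. A straightforward rescaling of the classical Stein--Tomas bound from the unit sphere gives $\|E_r f\|_{L^{p_c}}\lesssim r^{-d/p_c-(d-1)/2}\|\hat{f}\|_{L^2(rS^{d-1})}$.

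Next I would bring the $L^{p_c}$ norm inside the $\rd r$ integral by Minkowski's inequality (valid since $p_c\geq 1$), apply Cauchy--Schwarz in $r$, and exploit the Plancherel identity $\int_0^\infty\|\hat{f}\|_{L^2(rS^{d-1})}^2\,\rd r=\|f\|_2^2$ (polar coordinates in frequency) to reduce matters to a one-dimensional integral:
\begin{align*}
\|(-\Delta-(\lambda+\I\epsilon)^2)^{-1} f\|_{L^{p_c}}\lesssim \left(\int_0^\infty \frac{r^{(d-1)/(d+1)}}{|r^2-(\lambda+\I\epsilon)^2|^2}\,\rd r\right)^{1/2}\|f\|_2,
\end{align*}
where the exponent $(d-1)/(d+1)$ arises from $2\bigl[(d-1)-d/p_c-(d-1)/2\bigr]$ after using $p_c=2(d+1)/(d-1)$.

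Finally, I would evaluate the one-dimensional integral by splitting at $r\in[\lambda/2,3\lambda/2]$ and its complement. On the peak interval, $|r^2-(\lambda+\I\epsilon)^2|^2\approx 4\lambda^2((r-\lambda)^2+\epsilon^2)$ and $\int\rd s/(s^2+\epsilon^2)\approx \pi/\epsilon$ yield a contribution of order $\lambda^{(d-1)/(d+1)-2}\epsilon^{-1}$; off the peak, $|r^2-(\lambda+\I\epsilon)^2|\gtrsim\max(\lambda^2,r^2)$ produces a contribution of order $\lambda^{(d-1)/(d+1)-3}$. The hypothesis $\lambda^{-1}\leq\epsilon\leq 1$ forces $\lambda\geq 1\geq\epsilon$, so the peak dominates by a factor $\lambda/\epsilon\geq 1$. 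Taking square roots and using $1/p_c=(d-1)/(2(d+1))$ recovers $\epsilon^{-1/2}\lambda^{1/p_c-1}$. The only obstacle is bookkeeping in the one-variable integral; Stein--Tomas and Plancherel serve as black boxes, and the role of the lower bound on $\epsilon$ is precisely to suppress the tail of the $r$-integral relative to the resonant peak at $r=\lambda$.
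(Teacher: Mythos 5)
Your proof is correct and lands on essentially the same one-dimensional radial integral that drives the estimate, but the route there is not the paper's. The paper dualizes immediately: it proves the equivalent bound $\|(-\Delta-z)^{-1}\|_{L^{p_c'}\to L^2}\lesssim\epsilon^{-1/2}\lambda^{1/p_c-1}$, uses Plancherel to turn the $L^2$ norm into a frequency-weighted integral of $|\widehat{f}|^2$, passes to polar coordinates, and then applies the scaled Stein--Tomas \emph{restriction} estimate $\int_{S^{d-1}}|\widehat{f}(r\omega)|^2 r^{d-1}\rd\omega\lesssim r^{2/p_c}\|f\|_{p_c'}^2$ pointwise in $r$. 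You instead stay on the $L^2\to L^{p_c}$ side, write the resolvent as a radial superposition of Fourier \emph{extension} operators $E_r$, pull the $L^{p_c}$ norm inside the radial integral by Minkowski, invoke the (dual) Stein--Tomas extension estimate, and then apply Cauchy--Schwarz in $r$ plus Plancherel in polar form. These are dual presentations of the same core idea. Both arrive at the same power $2/p_c=(d-1)/(d+1)$ on $r$ and are evaluated identically: the resonant band $r\approx\lambda$ contributes $\approx\lambda^{2/p_c-2}\epsilon^{-1}$, the tail contributes the smaller $\approx\lambda^{2/p_c-3}$, and the hypothesis $\lambda^{-1}\leq\epsilon\leq 1$ (forcing $\lambda\geq 1\geq\epsilon$) is used exactly as you say, to make the peak dominate. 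The paper's dualized version is marginally leaner --- it avoids the Minkowski and Cauchy--Schwarz steps in $r$ because Plancherel already produces an $L^2$-in-$r$ quantity --- but there is no gap in your argument; it simply applies duality to Stein--Tomas rather than to the operator.
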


\begin{proof}
The proof is essentially contained in \cite{MR894584}. For completeness, we provide a full proof here (communicated to the author by C.D.\ Sogge). By duality, and due to the assumptions on $\epsilon,\lambda$, it is sufficient to prove that
\begin{align}\label{pc'-2}
\left\|\int\frac{\e^{\I x\cdot\xi}\widehat{f}(\xi)}{|\xi|^2-\lambda^2-\I\epsilon\lambda}\rd \xi\right\|_{2}\lesssim  \epsilon^{-\frac{1}{2}}\lambda^{\frac{1}{p_c}-1}\|f\|_{p_c'}.
\end{align}
The Stein--Tomas theorem asserts that
\begin{align}\label{Stein--Tomas}
\left(\int_{S^{d-1}}|\widehat{h}(\omega)|^2\rd \omega\right)^{\frac{1}{2}}\lesssim  \|h\|_{p_c'}.
\end{align} 
By scaling, \eqref{Stein--Tomas} is equivalent to
\begin{align}\label{Stein--Tomas scaling}
\left(\int_{S^{d-1}}|\widehat{h}(r\omega)|^2r^{d-1}\rd \omega\right)^{\frac{1}{2}}\lesssim r^{\frac{1}{p_c}} \|h\|_{p_c'}.
\end{align} 
Using polar coordinates, Plancherel's theorem and \eqref{Stein--Tomas scaling}, we get, since $p_c>2$,
\begin{align*}
\left\|\int\frac{\e^{\I x\cdot\xi}\widehat{f}(\xi)}{|\xi|^2-\lambda^2-\I\epsilon\lambda}\rd \xi\right\|_{2}^2
&\lesssim\lambda^{-2}\int_0^{\infty}\int_{S^{d-1}}|\widehat{f}(r\omega)|^2r^{d-1}\rd \omega\frac{\rd r}{(r-\lambda)^2+\epsilon^2}\\
&\lesssim\lambda^{-2}\left(\int_0^{\infty}\frac{r^{\frac{2}{p_c}}}{(r-\lambda)^2+\epsilon^2}\rd r\right)\|f\|^2_{p_c'}\\
&\lesssim \epsilon^{-1}\lambda^{\frac{2}{p_c}-2}\|f\|^2_{p_c'}.
\end{align*}
\end{proof}

\begin{corollary}
For $q>(d+1)/2$, $|z|\approx 1$, $|\im \,z|=\epsilon\ll 1$, we have
\begin{align}\label{fg}
\|f(-\Delta-z)^{-1}g\|\lesssim \epsilon^{-1+\frac{d+1}{4q}}\|g\|_2\|f\|_{2q}.
\end{align}
\end{corollary}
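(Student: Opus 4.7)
The plan is to reduce the bilinear bound to a mapping property of the resolvent via Hölder's inequality and then to obtain the required $L^2\to L^r$ estimate by Riesz--Thorin interpolation between the trivial $L^2\to L^2$ resolvent bound and the Stein--Tomas bound of Lemma \ref{lemma 2-pc}.

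First, I would apply Hölder's inequality with the conjugate exponents $1/2=1/(2q)+1/r$, that is, $r=2q/(q-1)$, to reduce to
\begin{align*}
\|f(-\Delta-z)^{-1}g\|_2\leq \|f\|_{2q}\,\|(-\Delta-z)^{-1}g\|_r.
\end{align*}
Since $q>(d+1)/2$, this exponent satisfies $r\in(2,p_c)$, with the endpoints corresponding to $q=\infty$ and $q=(d+1)/2$ respectively. It then suffices to prove
\begin{align*}
\|(-\Delta-z)^{-1}\|_{L^2\to L^r}\lesssim \epsilon^{-1+\frac{d+1}{4q}}.
\end{align*}

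To obtain this, I would interpolate between two endpoint resolvent estimates. At $r=2$, the spectral theorem yields the trivial bound $\|(-\Delta-z)^{-1}\|_{L^2\to L^2}\leq |\im z|^{-1}\lesssim\epsilon^{-1}$. At $r=p_c$, writing $\sqrt z=\lambda+\I\mu$ with $\lambda\approx 1$ and $\mu=\im\sqrt z\approx\epsilon/2$, Lemma \ref{lemma 2-pc} gives $\|(-\Delta-z)^{-1}\|_{L^2\to L^{p_c}}\lesssim\mu^{-1/2}\lambda^{1/p_c-1}\lesssim\epsilon^{-1/2}$. Riesz--Thorin then yields, with the interpolation parameter $\theta$ determined by $1/r=(1-\theta)/2+\theta/p_c$, the operator norm bound $(\epsilon^{-1})^{1-\theta}(\epsilon^{-1/2})^{\theta}=\epsilon^{-1+\theta/2}$. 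A short computation gives $\theta=(d+1)/(2q)$, so that $\theta/2=(d+1)/(4q)$, and the exponent matches the desired one. Combining with the Hölder reduction above finishes the argument.

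The only delicate point is ensuring that the regime $|z|\approx 1$, $|\im z|\ll 1$ is covered by Lemma \ref{lemma 2-pc}; once the parameters $\lambda\approx 1$ and $\mu\approx\epsilon/2$ are identified via $z=(\lambda+\I\mu)^2$, the Stein--Tomas resolvent estimate gives the $\epsilon^{-1/2}$ loss that, interpolated with the trivial $\epsilon^{-1}$ loss, produces exactly the claimed power $\epsilon^{-1+(d+1)/(4q)}$. The remainder (Hölder and Riesz--Thorin) is entirely routine, so the main substance of the corollary sits in Lemma \ref{lemma 2-pc}.
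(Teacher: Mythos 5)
Your argument is correct, and it is a mild simplification of the paper's route. The paper fixes $f>0$, forms the analytic family $T_{\zeta}=f^{\zeta}(-\Delta-z)^{-1}$ on the strip $0\le\re\zeta\le 2q/(d+1)$, bounds $\|T_{\zeta}\|_{L^2\to L^2}$ at the two edges (by the spectral theorem on the left edge, and by Lemma~\ref{lemma 2-pc} plus H\"older on the right edge), and concludes via Stein's complex interpolation theorem evaluated at $\zeta=1$. You instead apply H\"older once at the outset to peel off $\|f\|_{2q}$, then apply Riesz--Thorin to the single operator $(-\Delta-z)^{-1}$ mapping $L^2\to L^r$, $r=2q/(q-1)\in(2,p_c)$, between the same two endpoints $r=2$ and $r=p_c$. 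Since the resolvent is a fixed linear operator (no $\zeta$-dependent weight), Riesz--Thorin suffices and the analytic-family machinery is not needed; the interpolation parameter $\theta=(d+1)/(2q)$ and the resulting exponent $\epsilon^{-1+\theta/2}=\epsilon^{-1+(d+1)/(4q)}$ come out identically. Both arguments rest on the same key input, Lemma~\ref{lemma 2-pc}, and your scaling remark at the end (identifying $\lambda\approx 1$ and $\mu=\im\sqrt{z}\approx\epsilon/2$) is the right way to reconcile the normalizations; the approaches are thus equivalent in substance, with yours being a bit more elementary in its interpolation step.
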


\begin{proof}
Without loss of generality we may assume that $f,g>0$. We apply Stein's interpolation theorem to the analytic family
\begin{align*}
T_{\zeta}=f^{\zeta}(-\Delta-z)^{-1},\quad 0\leq\re\,\zeta\leq \frac{2q}{d+1}.
\end{align*}
It is sufficient to prove the two estimates
\begin{align*}
\|T_{\zeta}\|\leq \epsilon^{-1},\quad \re\,\zeta&=0,\\
\|T_{\zeta}\|\lesssim \epsilon^{-\frac{1}{2}}\|f\|^{\frac{q}{d+1}}_q,\quad \re\,\zeta &=\frac{2q}{d+1}.
\end{align*}
The first is just the trivial bound \eqref{trivial BS bound}. The second follows from \eqref{2-pc} and H\"older.
\end{proof}

\begin{proof}[Proof of Proposition \ref{prop. quasimode}]
From the quasimode equation \eqref{quasimode equation} it follows that
\begin{align*}
|V_n|^{\frac{1}{2}}\psi_n=|V_n|^{\frac{1}{2}}(-\Delta-z_n)^{-1}g_n-|V_n|^{\frac{1}{2}}(-\Delta-z_n)^{-1}V_n^{\frac{1}{2}}(|V_n^{\frac{1}{2}}|\psi_n).
\end{align*}
By \eqref{fg} this implies that
\begin{align*}
1\lesssim \|V_n^{\frac{1}{2}}(-\Delta-z_n)^{-1}V_n^{\frac{1}{2}}\|+\epsilon^{-1+\frac{d+1}{4q}}\|V_n\|_{q}^{\frac{1}{2}}\frac{\|g_n\|_2}{\|V_n^{\frac{1}{2}}\psi_n\|_2}.
\end{align*}
By assumption \eqref{quasimode condition} we can absorb the second term. Then the usual Birman--Schwinger argument applies and yields \eqref{quasimode bound}, in parallel to the poof of \eqref{Ruperts bound}.
\end{proof}

\section{Examples}

\begin{example}
Fix $q>(d+1)/2$, $\mu\in (0,1]$, and consider the ball 
\begin{align*}
\mathcal{B}_{q,\mu}:=\set{V\in L^{q}(\R^d)}{\|V\|_q\leq \mu}.
\end{align*}
Also fix a small number $\delta>0$, and consider the rectangle 
\begin{align*}
\Omega_{\delta}:=\set{z\in\C}{1\leq \re\, z\leq 2,\, 0<\im\, z\leq \delta}
\end{align*}
in the upper half plane (the horizontal position is not so important, only boundedness of $\re\, z$ from above and below, i.e.\ away from zero, is needed). Note that $\im\sqrt{z}=(\im\,z)/2+\mathcal{O}(\delta^2)$ for $z\in\Omega_{\delta}$, as $\delta$ tends to zero. Assume that $z\in\Omega_{\delta}$ is an eigenvalue of $-\Delta+V$, where $V\in \mathcal{B}_{q,\mu}$. Let $\delta$ be so small that the last term in \eqref{asspt on M} is bounded from above by $-d\beta_q^{-1}\ln(\beta_q\,\delta)$. Dropping negative terms in \eqref{asspt on M} we may choose $M:=-3d\beta_q^{-1}\ln(\beta_q\,\im\,z)$.
Corollary \ref{cor 2} then implies the lower bound
\begin{align}\label{lower bound}
\sup_{y\in\R^d} \|V\|_{L^{\frac{d+1}{2}}\left(B\left(y,6d\frac{|\ln(\beta_q\,\im\,z)|}{\beta_q\,\im\,z}\right)\right)}\geq c>0.
\end{align}
In other words, a potential $V\in \mathcal{B}_{q,\mu}$ giving rise to an eigenvalue $z\in\Omega_{\delta}$ must have positive $L^{\frac{d+1}{2}}$ mass over some ball of radius $|\ln \im\, z|/\im\, z$.
\end{example}

\begin{example}\label{example Ionescu--Jerison}
This is a continuation of the previous example. We consider a family of potentials $V_n$, depending on a large parameter $n$ and satisfying the upper bound
\begin{align}\label{Ionescu-Jerison}
|V_n(x)|\lesssim (n+|x_1|+|x'|^2)^{-1}.
\end{align}
Such potentials naturally appear in examples of absence of embedded eigenvalues, see Ionescu--Jerison \cite{MR2024415}, Frank--Simon \cite{MR3713021} and the author \cite{arXiv:1709.06989}. Denote 
\begin{align}\label{U}
U_{n,\kappa}:=V_n+\kappa W,
\end{align}
where $W\in\mathcal{S}(\R^d)$ is a fixed potential, $V_n$ satisfy \eqref{Ionescu-Jerison}, and $|\kappa|\ll 1$ is a small parameter. In \cite{MR3713021} it is mentioned that $U_{n,\kappa}$ would be a plausible candidate to disprove the so-called Laptev--Safronov conjecture. This would be achieved if one could show that there is a sequence of eigenvalues $z_{n,\kappa}$ of $-\Delta+U_{n,\kappa}$ such that
\begin{align}\label{LS disprove}
\lim_{(n,\kappa)\to(\infty,0)}\frac{|z_{n,\kappa}|^{q-\frac{d}{2}}}{\|U_{n,\kappa}\|_q^q}=\infty
\end{align} 
for every $q>(d+1)/2$. For a sequence $z_{n,\kappa}\in\Omega_{\delta}$ (as in the previous example) this is equivalent to
\begin{align}\label{LS disprove simple}
\lim_{(n,\kappa)\to(\infty,0)}\|U_{n,\kappa}\|_q^q=0.
\end{align}
It is easy to check that $V_n,U_{n,\kappa}\in L^q(\R^d)$ for any $q>(d+1)/2$, with
\begin{align}\label{Lq norms of Vn and Unk}
\|V_n\|_q=\mathcal{O}\left(n^{\frac{d+1}{2q}-1}\right),\quad
\|U_{n,\kappa}\|_q=\mathcal{O}\left(n^{\frac{d+1}{2q}-1}+|\kappa|\right),
\end{align}
and hence \eqref{LS disprove simple} holds.
We now show that a necessary condition for $z_{n,\kappa}$ to be an eigenvalue of $-\Delta+U_{n,\kappa}$ is that
\begin{align}\label{nec. condition LS}
|\im\,z_{n,\kappa}|\leq C\e^{-cn}
\end{align}
for some constants $C,c>0$ and for $n$ large and $|\kappa|$ small enough. In particular, this implies that the bound \eqref{Ruperts bound} is not saturated for the potentials $U_{n,\kappa}$, and that Corollary \ref{cor 2} yields much better bounds in this case.
Recall that we assume $\im\,z_{n,\kappa}>0$, but the same argument works for $\im\,z_{n,\kappa}<0$.
The first observation is that in view of \eqref{Ruperts bound} (or even \eqref{bound cor of cor 1}) the condition \eqref{LS disprove simple} implies
\begin{align*}
\lim_{(n,\kappa)\to(\infty,0)}|\im\,z_{n,\kappa}|=0.
\end{align*}
Hence, we may choose $\delta$ in the definition of $\Omega_{\delta}$ arbitrarily small. Since $U_{n,\kappa}\in L^q(\R^d)$ for any $q>(d+1)/2$ we may fix such a $q$ and a corresponding $\beta_q$ as in Corollary \ref{cor 2}. We first prove \eqref{nec. condition LS} in the case $\kappa=0$. Since we are only concerned with norms we may substitute $V_n$ by the right hand side of \eqref{Ionescu-Jerison} for the following argument. We may then write the bound \eqref{lower bound} as 
\begin{align}\label{lower bound A}
\|V_n\|_{L^{\frac{d+1}{2}}(B(0,R))}\gtrsim 1,\quad \mbox{where}\quad
R:=A\frac{|\ln\epsilon|}{\epsilon},\quad \epsilon=\im\,z_{n,\kappa},
\end{align}
and where $A$ is some sufficiently large constant. We argue by contradiction. Assume that \eqref{nec. condition LS} failed, i.e.\ that for any $C,c>0$ there are $n$ and $\kappa$ such that
\begin{align}\label{contradiction assumption}
\frac{\epsilon\e^{cn}}{|\ln\epsilon|}\geq C.
\end{align}
Note that the logarithmic term can be bounded by an arbitrary power of $\epsilon^{-1}$ and can thus be absorbed into the constants by making $c$ slightly smaller and $C$ slightly larger. A straightforward computation shows that
\begin{align}\label{local norm of Vn}
\|V_n\|_{L^{\frac{d+1}{2}}\left(B\left(0,R\right)\right)}\lesssim\frac{1}{n}\max\left(1,\ln\left(\frac{R}{n}\right)\right).
\end{align}
If the maximum were in fact $1$, then \eqref{lower bound A} would imply that $n\lesssim 1$, which is absurd; hence we may replace the maximum by $\ln\left(\frac{R}{n}\right)$. Under assumption \ref{contradiction assumption} we have 
\begin{align*}
\frac{R}{n}\leq \frac{A}{C}\frac{\e^{cn}}{n}.
\end{align*}
Plugging this into \eqref{local norm of Vn} yields
\begin{align}\label{end of contradiction proof}
\|V_n\|_{L^{\frac{d+1}{2}}\left(B\left(0,R\right)\right)}\lesssim c
\end{align}
for $n$ sufficiently large. Since $c$ was arbitrarily small, this contradicts \eqref{lower bound A}. The proof for $\kappa\neq 0$ and $|\kappa|$ sufficiently small is an easy modification of the previous argument. The only difference is that $|\kappa|$ is added to the right hand sides of \eqref{local norm of Vn} and \eqref{end of contradiction proof}. 
\end{example}

\begin{example}\label{example rectangular well}
The next example is more informal than the previous ones. We consider the ``rectangular well"
\begin{align}\label{rectangular well}
V:=\alpha\,\mathbf{1}_{\mathbf{R}},\quad 
\textbf{R}:=[-R,R]\times [-\sqrt{R},\sqrt{R}]
\times\ldots\times [-\sqrt{R},\sqrt{R}].
\end{align}
where $\alpha\in\C$, $|\alpha|\ll 1$, and $R\gg 1$ are two parameters. We easily compute
\begin{align}\label{norm rectangular well}
\|V\|_q\approx |\alpha|R^{\frac{d+1}{2q}},\quad 1\leq q\leq \infty.
\end{align}
Suppose that $z$ is an eigenvalue of $-\Delta+V$. We omit the dependence of $V$ and $z$ on the parameters $\alpha$, $r$ in the notation. We denote $\epsilon=\im\, z$ and assume that $\epsilon>0$. Since $V$ has support in $\textbf{R}$, an eigenfunction $u$ (say $L^2$ normalized) corresponding to the eigenvalue $z$ decays exponentially outside of $\textbf{R}$, at a rate $\exp(-\epsilon|x|)$; this follows from inspection of the fundamental solution of $-\Delta-z$. If we multiply the eigenvalue equation
$-\Delta u+V u=z u$ by $\mathbf{1}_{\textbf{R}} \overline{u}$ and integrate by parts, we get informally
\begin{align}\label{IBP}
\int_{\textbf{R}}|\nabla u|^2+\alpha \int_{\textbf{R}} |u|^2=z \int_{\textbf{R}} |u|^2 +\mbox{(boundary terms)}.
\end{align} 
To make sense of the boundary terms we may slightly smooth out the rectangle $\textbf{R}$. In any event, due to the exponential decay of $u$ we have the rough bound
\begin{align}\label{boundary terms}
\mbox{(boundary terms)}\lesssim \exp(-c\epsilon \sqrt{R})\times \mbox{(perimeter of $\textbf{R}$)}.
\end{align}
Observe that the worst contribution of the boundary terms comes from those boundary surfaces that involve the long side of the rectangle. On the ``good" boundary surfaces one has the better exponential decay $\exp(-c\epsilon R)$. As we have to contend with \eqref{boundary terms}, we can neglect boundary terms provided that
\begin{align}\label{neglect boundary terms}
\sqrt{R}\geq \frac{C}{\epsilon}\left(\ln R+|\ln\epsilon|\right).
\end{align}
for some sufficiently large constant $C$. It is straightforward to see that if \eqref{neglect boundary terms} is satisfied when $|\ln\epsilon|$ is dropped from the right hand side, then it holds as stated for sufficiently large $R$. We thus assume that  
\begin{align}\label{relationship r epsilon bad}
\sqrt{R}/\ln R\geq C/\epsilon,
\end{align}
which also implies that $\sqrt{R}\geq C|\ln\epsilon|/\epsilon$.
Taking imaginary parts in \eqref{IBP} then yields
\begin{align*}
(\im\,\alpha-\epsilon)\int_{\textbf{R}} |u|^2=\mathcal{O}(\epsilon^2).
\end{align*}
One concludes (still informally) that $\im\,\alpha=\epsilon$ up to a small error that goes to zero as $R\to\infty$ and $\epsilon\to 0$.
Ignoring the $\ln R$ term in \eqref{relationship r epsilon bad} (since $q>(d+1)/2$ is fixed we always have an epsilon of room that allows us to replace $\ln R$ by a small power of $R$) and plugging this into \eqref{norm rectangular well} yields that $\|V\|_q\gtrsim \epsilon^{1-\frac{d+1}{q}}$; the right hand side only tends to zero if $q>d+1$. The loss of the ``$2$" in the denominator comes from the bad boundary estimate \eqref{boundary terms}. Had we only considered the good boundary surfaces we would have the better bound $\|V\|_q\gtrsim \epsilon^{1-\frac{d+1}{2q}}$. To circumvent the problem we simply \emph{assume} that 
\begin{align}\label{assumption r eps rectangle}
R=A\frac{|\ln\epsilon|}{\epsilon},\quad \alpha:=\I\,\epsilon.
\end{align}
We also (still) assume that $-\Delta+V$ has an eigenvalue $z$ with $|z|\approx 1$, $\im\, z\approx\epsilon$. From \eqref{norm rectangular well} and \eqref{assumption r eps rectangle} it is then easy to see that the bounds \eqref{Ruperts bound} and \eqref{lower bound A} are saturated up to logarithms. 
\end{example}

\begin{example}
We now establish a rigorous version of the last example in one dimension. The claim we are going to prove is the following: Given $\epsilon>0$ sufficiently small, there exists a ``complex square well potential" $V=V_0\mathbf{1}_Q$, $Q=[-R,R]$, $R\approx |\ln\epsilon|/\epsilon$, $V_0\in\C$, $|V_0|\approx\epsilon$, such that $-\partial^2+V$ has eigenvalue $(1+\I\epsilon)^2$ and $\|V\|_1\approx |\ln\epsilon|$. In particular, the bound \eqref{lower bound A} is saturated up to logarithms. 

\begin{proof}
Since $V(x)=V(-x)$ the wavefunction $\psi$ must be either even or odd.
We consider the even case. Then the Ansatz for $\psi$ is
\begin{align*}
\psi(x)=\begin{cases}
A\e^{-\I zx}\quad &(x\leq -R),\\
B\cos(\I kx)\quad &(-R\leq x\leq R),\\
A\e^{\I zx}\quad &(x\geq R),
\end{cases}
\end{align*}
where 
\begin{align}
\label{V0 vs k}
k^2=z^2-V_0
\end{align}
Continuity of $\psi,\psi'$ at $R$ is equivalent to
\begin{align*}
\det\begin{pmatrix}
\e^{\I zR}&-\cos(kR)\\
\I z\e^{\I zR}&k\sin(kR)
\end{pmatrix}
=0 \iff z=\I k\frac{\sin(kR)}{\cos(kR)}.
\end{align*}
This can be written as
\begin{align}\label{even 1}
z=-k\frac{1-\e^{2\I kR}}{1+\e^{2\I kR}}.
\end{align}
We make the change of variables
\begin{align*}
\omega=1+k\in \C, \quad z=1+\I\eps\in\C,\quad 0<\eps<1/2.
\end{align*}
For fixed $0<\rho<1$ define a function, depending on the parameters $R>0$, $\eps>0$, 
\begin{align}\label{even 2}
B(\I\eps,\rho \eps)\ni\omega\mapsto f_{\eps,R}(\omega):=\I\eps+\omega \frac{1-\e^{2\I(\omega-1)R}}{1+\e^{2\I(\omega-1)R}}.
\end{align}
Then it can be seen that \eqref{even 1} together with the condition $|1+k-\I\eps|<\rho\eps$ is equivalent to $f_{\epsilon,R}(\omega)=0$. 
We make the assumption that
\begin{align}\label{even 3}
(1-\rho)\eps R\geq -C-\ln\eps
\end{align}
for some large but fixed constant $0<C<-\ln\eps$,
which ensures that 
\begin{align}\label{even 4}
\sup_{\omega\in B(\I\eps,\rho \eps)}|\e^{2\I(\omega-1)R}|=\mathcal{O}(\eps^2).
\end{align}
In particular, we have that
\begin{align*}
f_{\eps,R}(\omega)=\I\eps+\omega+\mathcal{O}(\eps^2).
\end{align*}
Applying Lemma 2.23 in \cite{DZ19} yields that for $\eps$ sufficiently small and $R$ sufficiently large (depending on $\rho$) so that \eqref{even 3} holds, the function $f_{\epsilon,R}$ has exactly one simple zero in $B(\I\eps,\rho \eps)$, given by
\begin{align*}
\omega=-\I\eps(1+\mathcal{O}(\eps)).
\end{align*}
This means that \eqref{even 1} has a solution $k\in\C$ with $|1+k-\I\eps|<\rho\eps$ and therefore $z^2=(1+\I\eps)^2$ is an eigenvalue for the Schr\"odinger operator $-\partial^2+V$ with potential $V=V_0\mathbf{1}_{[-R,R]}$. Recalling \eqref{V0 vs k} we get the estimate
\begin{align*}
\|V\|_q
=|z^2-k^2|R^{1/q}=|z^2-(\omega-1)^2|R^{1/q}\leq 2\eps(2+\rho+\mathcal{O}(\eps)) R^{1/q}.
\end{align*}
Fixing $R$ by requiring equality in \eqref{even 3}, we get
\begin{align}\label{even 5}
\|V\|_q\approx\eps^{1-1/q}|\ln\eps|^{1/q},\quad 1\leq q\leq \infty.
\end{align}
\end{proof}
\end{example}

\begin{example}
The next example is a simplified version of \cite{MR3627408} in $d=3$ dimensions.\footnote{Similarly, the previous example could also be regarded as a simplified one-dimensional version.} Concretely, we show: Given $\epsilon>0$ sufficiently small, there exists a radial potential supported in $B(0,R)$, $R\approx |\ln\epsilon|/\epsilon$, such that $-\Delta^2+V$ has eigenvalue $(1+\I\epsilon)^2$ and $\|V\|_q\lesssim \epsilon^{1-3/q}$ for $q>3$ (up to logarithms).  
The potential in this example is too large to saturate the bound \eqref{lower bound A}; in fact, 
\begin{align*}
\|V\|_{L^{2}(B(0,R)}\approx \epsilon^{-\frac{1}{2}}\quad \mbox{(up to logaraithms)}.
\end{align*}
\begin{proof}
Let $z_1,z_2\in \C^+$ (i.e. $\im z_j>0$), $A_1,A_2\in\C$ and $R>0$. All parameters will be determined later. Then set
\begin{align*}
u(r):=\begin{cases}
A_1\frac{\sin(z_1r)}{r}\quad &(r\leq R),\\
A_2\frac{\e^{\I z_2r}}{r}\quad &(r\geq R).
\end{cases}
\end{align*}
In order to have $u\in H^2(\R^3)$ it is enough that $u$ and $u'$ are continuous at $r=R$. This is the case iff the linear equation
\begin{align*}
\begin{pmatrix}
\sin(z_1R)&-\e^{\I z_2R}\\z_1\cos(z_1R)&-\I z_2\e^{\I z_2R}
\end{pmatrix}
\begin{pmatrix}
A_1\\A_2
\end{pmatrix}
=\begin{pmatrix}
0\\ 0
\end{pmatrix}
\end{align*}
has a nontrivial solution $(A_1,A_2)^T$, hence iff
\begin{align*}
\det\begin{pmatrix}\sin(z_1R)&-\e^{\I z_2R}\\z_1\cos(z_1R)&-\I z_2\e^{\I z_2R}
\end{pmatrix}
=0
\iff  -\I z_2\sin(z_1R)+z_1\cos(z_1R)=0.
\end{align*}
In other words, by Euler's formula,
\begin{align}\label{z2 vs z1}
z_2=-z_1\frac{1+\e^{2\I z_1R}}{1-\e^{2\I z_2R}}.
\end{align}
We set $\re \,z_1=1/2$ and write $\im\, z_1=\eps>0$. Then
\begin{align*}
\im\, z_2&=-\eps-\sin(R)\e^{-2\eps R}+E,\\
|E|&\leq c_1 \eps^2+c_2\e^{-4\eps R}.
\end{align*} 
We set $R=C/(2\eps)$, with $C_0\leq C\leq -\ln((1+\delta)\eps)$. Here, $\delta>0$ is fixed and $C_0>0$ is such that $\e^{-C_0}+c_2\e^{-2C_0}\geq (1-\delta/4)\e^{-C_0}$. Let $\eps_0>0$ be such that $-\eps+c_1\eps^2\geq -(1+\delta/4)\eps$ for all $\eps\leq \eps_0$. We may also arrange that $\sin(R)=-1$. Then 
\begin{align*}
\im\, z_2&\geq -(1+\delta/4)\eps+(1-\delta/4)\e^{-C}\\
&\geq -(1+\delta/4)+(1-\delta/4)(1+\delta)\eps=(\delta/2+\mathcal{O}(\delta^2))\eps.
\end{align*}
Hence, there is $\delta_0>0$ such that for all $\delta\leq\delta_0$
\begin{align*}
\im\, z_2\geq \frac{\delta}{4}\eps.
\end{align*}
Then $u$ satisfies
\begin{align*}
|(\Delta+z_2^2)u(r)|\lesssim \e^{-C}\mathbf{1}\{r\leq C/(2\eps)\}|u(r)|.
\end{align*}
Hence, if $V$ is defined by the equation $(\Delta+z_2^2-V(r))u(r)=0$, we have
\begin{align*}
\|V\|_q\lesssim \e^{-C}(C/\eps)^{3/q}.
\end{align*}
The minimum is achieved for $C=-\ln((1+\delta)\eps)$, namely
\begin{align*}
\|V\|_q\lesssim \eps^{1-3/q} \quad\mbox{up to }\ln(\eps).
\end{align*}
This tends to $0$ as $\eps\to 0$, provided $q>d$. Note that since $\e^{\I z_2 r}/r$ is a fundamental solution to $-\Delta-z_2^2$, the support of $V$ is contained in $B(0,R)$.
\end{proof}
\end{example}

\begin{example}\label{example sharp quasimode}
Here we prove that the result of Proposition \ref{prop. quasimode} is sharp. We could take the rectangular well potential as in Example \ref{example rectangular well}, but for the sake of variety we consider a Gaussian potential $G(t)=\exp(-t^2/2)$. By slight abuse of notation we understand that $\epsilon>0$ is a sequence tending to zero. We suppress the dependence of $V,\psi,g$ in Proposition \ref{prop. quasimode} on the index $n$ of this sequence. We start with the quasimode
\begin{align*}
\psi(x):=N^{-1/2}\e^{\I x_1} G(|y|)_{y=(\epsilon x_1,\sqrt{\epsilon}x')},
\end{align*}
where $x=(x_1,x')\in\R\times\R^{d-1}$ and $N=\epsilon^{-\frac{d+1}{2}}$ is a normalization factor. We compute
\begin{align}\label{computation}
(-\Delta-1-\I\epsilon)\psi(x)=\left(\epsilon\left(d-1-|y'|^2+2\I y_1-\I\right)+\epsilon^2\left(1-y_1^2\right)\right)\psi(x),
\end{align}
where, as before, ${y=(\epsilon x_1,\sqrt{\epsilon}x')}$.
Hence, if $V(x)=\epsilon \chi(y)_{y=(\epsilon x_1,\sqrt{\epsilon}x')}$, where $\chi$ is an arbitrary Schwartz function, say with $\chi(0)=1$, we get
\begin{align*}
\|g\|_2:=\|(-\Delta-1-\I\epsilon+V)\psi\|_2\lesssim\epsilon,\quad \|V\|_q\approx 
\epsilon^{1-\frac{d+1}{2q}},\quad\|V^{\frac{1}{2}}\psi\|_2\approx 1.
\end{align*}
This implies that condition \eqref{quasimode condition} holds with a good margin and that \eqref{quasimode bound} is sharp.
\end{example}

\begin{example}
We show that the quasimode bound $\|g\|_2\lesssim \epsilon$ of the previous example may be improved (by changing the potential) to an exponentially small error. For this we set
\begin{align*}
V(x):=\epsilon V_1(x)+\epsilon^2 V_2(x)
\end{align*}
where
\begin{align*}
V_1(x)&:=-\left(d-1-|y'|^2+2\I y_1-\I\right)\mathbf{1}_{|y|\leq M},\\
V_2(x)&:=-\left(1-y_1^2\right)\mathbf{1}_{|y|\leq M},\\
\end{align*}
and $M\gg 1$ will be chosen later. Then \eqref{computation} yields
\begin{align*}
\|g\|_2:=\|(-\Delta-1-\I\epsilon+V)\psi\|_2
\lesssim \epsilon \exp\left(-\frac{1}{4} M^2\right).
\end{align*}
For fixed $(d+1)/2<q<\infty$ there exists $\delta\in (0,1)$ such that $q=(d+1)/(2(1-\delta))$. Then
\begin{align*}
\|V\|_q\lesssim 
\epsilon^{\delta}M^{2+\frac{d}{q}}.
\end{align*}
We now choose $M=\epsilon^{-\frac{\delta}{2(2+d/q)}}$, leading to
\begin{align*}
\|g\|_2\lesssim \epsilon \exp\left(-\frac{1}{4}\epsilon^{-\frac{\delta}{(2+d/q)}}\right),
\quad
\|V\|_q\lesssim \epsilon^{\frac{\delta}{2}}.
\end{align*}
\end{example}

\noindent\textbf{Acknowledgements:} The author gratefully acknowledges the hospitality of the Institut Mittag-Leffler and the invitation to the thematic program \emph{Spectral Methods in Mathematical Physics}. The present article was written during the authors stay. The idea to use Lemma \ref{lemma 2-pc} and the proof thereof is attributed to Chris Sogge, to whom the author is thankful.

\bibliographystyle{abbrv}

\end{document}